\newtheorem{theorem}{Theorem}
\newtheorem{prop}{Proposition}
\newtheorem{ass}{Assertion}
\title  {Chebyshev systems and zeros of a function on a convex curve}
\author {Oleg R. Musin}
\begin{document}
\date{}
\maketitle


\begin{abstract}
The classical Hurwitz theorem says that if  $n$ first ``harmonics'' ($2n + 1$ Fourier coefficients) of a continuous function $f(x)$ on the unit circle are zero, then $f(x)$ changes sign at least $2n + 1$ times. We show that similar facts  and its converse hold for any function that are orthogonal to a Chebyshev system.
These theorems can be extended for convex curves in $d$-dimensional Euclidean space. Namely, if a function on a curve is orthogonal to the space of $n$-degree polynomials, then the function has at least $nd + 1$ zeros. This bound is sharp and is attained for curves on which the space of polynomials forms classical polynomial and trigonometric Chebyshev systems. We can regard the theorem of zeros as a generalization of the four-vertex theorem.
There exists a discrete analog of the theorem of zeros for convex polygonal lines which yields a discrete version of the four-vertex theorem.
\end{abstract}

\section{Introduction}

The classical four vertex theorem for an oval (a plane convex closed smooth curve) states that the curvature function on an oval has at least four extrema (vertices) [1].
Recently, a series of works appeared (see [2-6] and others) in which different versions of this
theorem are studied for convex curves in $\mathbb{R}^d$ (vertices, flat points) and similar phenomena are considered (points of inflection, zeros of higher derivatives etc.).
The majority of the results obtained can be interpreted as the lower estimate of the number of zeros of the function which satisfies certain conditions on the curve.

In this paper, we tried to define conditions imposed on the function which guarantee the existence of zeros of the function. The $L_2$-orthogonality of a function to a Chebyshev system on an interval or a circle is one of these conditions. (Note that Chebyshev systems of functions have already appeared in Arnold's paper  [2] for similar purposes.) It turns out that if a continuous function on an interval or a circle is $L_2$-orthogonal to a Chebyshev system of functions of order $n$, then this function has at least $n$ zeros. In this paper, we show that this result is invertible and the lower estimate is attainable for any given Chebyshev system.

These results are applicable to functions on convex curves in $\mathbb{R}^d$. A curve in $\mathbb{R}^d$ is said to be convex if it intersects any hyperplane at no more than $d$ points while taking their multiplicity in consideration. For plane curves, this definition is consistent with the standard concept of convexity, and the properties of convex curves are now actively studied for $d > 2$ (see [2, 7, 8] and others). In this paper, a simple connection is revealed between the concept of convexity and Chebyshev systems. Let us consider the restriction of the space of linear functions in $\mathbb{R}^d$ to a curve. The curve is convex if and only if this space on the curve is Chebyshev space.

The main result of this paper (the theorem of zeros) for convex curves is that if a function on a curve is orthogonal to the space of $n$-degree polynomials, then the function has at least $nd + 1$ zeros. This estimate is exact and is attained for curves on which the space of polynomials forms classical polynomial and trigonometric Chebyshev systems. We can regard the theorem of zeros as a generalization of the four vertex theorem. We see this by observing that the radius of curvature of a plane convex curve regarded as a function on a circle satisfies the orthogonality condition for $n = 1$. There exists a discrete analog of the theorem of zeros for convex polygonal lines which yields a discrete version of the four vertex theorem.

\section{Chebyshev systems and sign changes}

 Everywhere in this paper we understand a zero of a function as a ``stable'' zero, i.e., changes of sign of the function. We now provide a more exact definition. The set of zeros of a continuous function on an interval or a circle is a compact set and every connected component of the set of zeros is a point or an interval. We say that a function changes sign on the connected component of the set of zeros if, in any neighborhood of this component, there exist two points at which the values of the function are of different signs. We consider the number of zeros (sign changes) of a function to be the number of connected components of the set of zeros where the function changes sign. If it is an interval, we select one point of each of these connected components and refer to this collection as the zeros (sign changes) of the function. Let us take the union of all connected components of sign changes, and consider the complement. It obviously follows from the definition of a sign change that on every connected component of the complement, the function is of constant sign, i.e., nonnegative or nonpositive, not identically zero.

The well-known book by Polya and Szeg\"o [9] contains two statements which are closely connected with this idea. Here are their formulations.

 \begin {ass} [9, II.4, problem 140] If n first ``moments'' of the continuous function $f(x)$ on a finite or infinite interval $a < x < b$ are zero,
$$
\int \limits_a^b
f(x)dx=
\int\limits_a^b
xf(x)dx =
\int\limits_a^b
x^2f(x)dx = \dots =
\int\limits_a^b
x^{n-1}f(x)dx = 0,
$$
then the function f changes sign on the interval $(a, b)$ at least $n$ times provided that it is not identically zero.
\end{ass}

 \begin{ass} [Hurwitz theorem, \cite{PS}, II.4, problem 141] If $n$ first ``harmonics'' ($2n + 1$ Fourier coefficients) of the continuous periodic function $f(x)$ with period $2\pi$ are zero,
$$
\int \limits_a^{2\pi}
f(x)dx=
\int\limits_a^{2\pi}
f(x)\cos(x)dx =
\int\limits_a^{2\pi}
f(x)\sin(x)dx =\ldots
$$
$$
\ldots =
\int\limits_a^{2\pi}
f(x)\cos(nx)dx =
\int\limits_a^{2\pi}
f(x)\sin(nx)dx = 0
$$
then, on any interval exceeding $2\pi$, the function $f(x)$ changes sign at least $2n + 1$ times provided that it is not identically zero.
\end{ass}

These two statements are similar and suggest the existence of a more general statement. We shall show later that this is the case.

Let $D$ be an interval $(a,b)$ or a circle defined on $[0, 2\pi]$. A set of continuous functions $f_i : D \rightarrow R$,
$i = 1, 2,\dots, n$, forms a {\it Chebyshev system} on $D$ if, for any constants $c_j$, $j = 1$, $2$,$\dots$, $n$, which are not simultaneously zero, the linear form $\sum^n_{j=1} c_jf_j(x)$ vanishes on D at no more than $n-1$ different points.

If $D$ is a circle, then $n$, which is the dimension of the Chebyshev space (the order of the system), is an odd number. This follows from the fact that the number of zeros (sign changes) is always an even number. In what follows, we shall not distinguish between the case of a circle $S_1$ and the case of an interval $(a, b)$ by assuming that the number of zeros of the function on a circle is even and the order of the Chebyshev system of functions on a circle is always odd.

The following classical polynomial and trigonometric Chebyshev systems of the spaces $C[a, b]$ and $C[S^1]$ respectively are well known:

\begin{equation}
    \{1, x, x^2,\dots, x^n\}
\end{equation}
and

\begin{equation}
          \{1, \cos x,\dots, \cos(n - 1)x, \sin x,\dots, \sin(n - 1)x\}.
\end{equation}

  Let $\rho(x)$ be a continuous function of constant sign on $D$.
 We say that two continuous functions $f (x)$ and $g(x)$ on $D$ are orthogonal with the weight function $\rho(x)$ if
$$
\int \limits_D f (x)g(x)\rho(x)dx = 0.
 $$

  Note that by the hypothesis of Assertion 1, the function $f (x)$ is orthogonal to the Chebyshev system of functions (1) and, in Assertion 2, it is orthogonal to system (2), respectively, with the weight function $\rho(x) \equiv 1$. It turns out that there is a generalization of Assertions 1 and 2.

  \begin{theorem} If the continuous function $f (x)$, $x \in D$, is orthogonal to the Chebyshev system of functions $f_1(x)$, $f_2(x)$,$\dots$, $f_n(x)$ with the weight function $\rho(x)$, then the function $f (x)$ has at least $n$ sign changes on $D$ provided that the function $f (x)\rho(x)$ is not identically zero.
  \end{theorem}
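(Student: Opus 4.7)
The plan is to argue by contradiction: assume $f$ has only $k < n$ sign changes, located at $x_1 < x_2 < \cdots < x_k$. I will construct a nontrivial $g$ in the Chebyshev span $V = \mathrm{span}(f_1, \ldots, f_n)$ for which $g(x)f(x) \ge 0$ throughout $D$ and the product is not identically zero. Since $\rho$ has constant sign and $f\rho \not\equiv 0$, this will force $\int_D g(x)f(x)\rho(x)\,dx \ne 0$, contradicting the hypothesis that $f$ is orthogonal with weight $\rho$ to every element of $V$, and in particular to the linear combination $g$.

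Two structural consequences of the Chebyshev property drive the construction. \emph{Fact 1:} for any $n-1$ distinct points in $D$, the subspace of $V$ vanishing at all of them is exactly one-dimensional, because a higher-dimensional kernel would let us prescribe one further zero and produce a nontrivial element of $V$ with $n$ zeros, forbidden by the definition. \emph{Fact 2:} the generator of that line must change sign at each of the $n-1$ prescribed points. Otherwise, choosing $h \in V$ by interpolation so that $h$ has the correct sign at every non-sign-change zero of $g$ (possible since in a Chebyshev system the evaluation functionals at $\le n$ distinct points are linearly independent), the perturbation $g-\varepsilon h$ would split each such touch-zero into two nearby genuine sign-change zeros by the intermediate value theorem, while preserving a zero near each existing sign-change zero, yielding a function in $V$ with at least $n$ zeros.

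In the case $k = n-1$, I take $g$ to be the generator of the line vanishing at $x_1, \ldots, x_{n-1}$. By Fact 2 it changes sign at each $x_i$, so its sign on the $n$ complementary intervals strictly alternates and, after a global sign choice, matches that of $f$; thus $gf \ge 0$ on $D$ and $gf > 0$ on every open interval where both $f$ and $\rho$ are nonzero. In the remaining case $k < n-1$, I adjoin $n-1-k$ auxiliary interpolation points, bringing the total to $n-1$, and then let the auxiliaries coalesce. On the circle, $n$ is odd and sign changes come in pairs, so $n-1-k$ is even: I pair the auxiliary points and merge each pair at an interior location, so that the two sign changes contributed by each pair annihilate in the limit. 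On an interval with $n-1-k$ odd, I pair what I can and push the single leftover point to a boundary endpoint of $D$, so its sign change migrates outside $D$. Normalizing the interpolants to unit norm in $V$ and invoking compactness of the unit sphere of this finite-dimensional space, I extract a convergent subsequence whose limit $g$ is a nonzero element of $V$ whose only sign changes in $D$ are at $x_1, \ldots, x_k$.

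The main obstacle I anticipate is this last limiting step: verifying that the normalized interpolants stay bounded away from zero, that the limit has the predicted sign pattern, and that no new sign changes materialize inside $D$. The control comes from the fact that the limit still lies in $V$ and hence still obeys the Chebyshev bound of $n-1$ total zeros; careful bookkeeping of where the zeros of the approximants migrate as the auxiliary points collapse should pin down the sign pattern of the limit and close the argument.
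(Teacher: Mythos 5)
Your proposal is correct, and its overall architecture coincides with the paper's: assume $q<n$ sign changes, produce a nontrivial element $\varphi$ of the Chebyshev span whose sign changes are exactly those of $f$, and derive a contradiction from $\int_D f(x)\varphi(x)\rho(x)\,dx\neq 0$ because $f\varphi\rho$ has constant sign and is not identically zero. The difference lies entirely in how the sign-matching element is obtained. The paper simply invokes Krein's theorem (the criterion $2p+q<n$ for prescribing the even- and odd-multiplicity roots of a generalized polynomial), applied with $p=0$; you instead prove the needed existence statement from scratch: one-dimensionality of the kernel of evaluation at $n-1$ points, the perturbation argument showing that the generator must change sign at every prescribed node (this is essentially the standard proof of the necessity half of Krein's criterion), and then a coalescence-and-compactness limit to dispose of the surplus $n-1-k$ nodes, with the parity bookkeeping (even surplus on the circle, a possible leftover node pushed to a boundary endpoint on an interval) handled correctly. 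What your route buys is self-containedness; what it costs is the limiting step you flag. One small correction there: the control that closes that step is not really the Chebyshev bound on the zeros of the limit (that alone would not exclude sign changes of $g$ materializing at new locations), but rather the locally uniform convergence $g_m\to g$ in the finite-dimensional space $V$, which forces any sign change of $g$ to be a limit of sign changes of the $g_m$, together with $\|g\|=1$ so that $g\not\equiv 0$. In fact you need not locate the sign changes of $g$ at all: for every $x$ outside the finitely many limit points one has $g_m(x)f(x)\ge 0$ for all large $m$, hence $g f\ge 0$ on all of $D$ by continuity, and $g f\rho\not\equiv 0$ because $g$, being a nonzero element of $V$, vanishes at only finitely many points. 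With that observation your argument closes completely.
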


 We have not found a direct formulation of this theorem in literature concerned with Chebyshev systems. Only closely related problems were considered such as annihilating generalized
 polynomials, zeros of orthogonal Chebyshev polynomials, and others (see, e.g., [10, 11]). At the same time, for the case of a circle and $\rho(x) \equiv 1$, the versions of Theorem 1 are well known (see [2, 12] and others). Actually, this theorem is a simple consequence of the result of Krein [10, Statement 4.2, 13, p. 143, Proposition 1].

Suppose that a set is given on $D$ that consists of two subsets of $p$ and $q$ nodes. In order for, among generalized Chebyshev polynomials (linear forms written as $\sum^n_{j=1} c_jf_j(x)$ ) of order $n$,
there should exist a polynomial for which the collection of roots of an even multiplicity should coincide with the subset of $p$ points and those of an odd multiplicity should coincide with a subset of $q$ points, it is necessary and sufficient that the condition $2p + q < n$ is fulfilled.

In order to prove Theorem 1, let us assume the contrary, i.e., that the number of sign changes of the function $f$ is equal to $q,$ where $q<n$. Then, in accordance with the preceding statement, there exists a generalized Chebyshev polynomial $\varphi(x)$ which changes sign only at the points at which the function $f$ changes sign. Hence, the function $\rho(x)f (x)\varphi(x)$ is of constant sign on $D$, and therefore the integral of this function cannot be zero. The latter fact contradicts the condition of orthogonality of the theorem.

The proof of Assertions 1 and 2 is carried out in the book [9] by a complete analogy. If we denote by $x_1, x_2,\dots, x_q$ the zeros of the function $f (x)$ on $D$, then we have
$$\varphi(x) = (x-x_1)(x-x_2) \dots (x-x_q)$$ in Assertion 1 and
$$\varphi(x) = \sin \frac{x-x_{1}}{2}\cdot \sin\frac{x-x_2}{2}\cdot \dots \cdot \sin\frac{x-x_{q}}{2}$$

Let the number $m(D, n)$ be equal to $n$ if $D$ is an interval and to $n + 1$ if it is a circle. The
number $m(D, n)$ is the minimal possible number of zeros of the function in Theorem 1 if we assume that the function is orthogonal to our Chebyshev system. A question which naturally arises pertains to the invertibility of this theorem. It turns out that the following holds.

 \begin{theorem} [Converse of Theorem 1] Let a Chebyshev system of functions $\{f_i\}$ of order $n$ be given on $D$. If the number of sign changes of the function $f (x)$ on $D$ is no less than $m(D, n)$, then there exists a continuous function $\rho(x)$ of constant sign such that the function f is orthogonal on $D$ to the given Chebyshev system with the weight function $\rho$.
 \end{theorem}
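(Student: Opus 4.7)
The plan is to recast the existence of $\rho$ as a convex-geometry problem in $\mathbb{R}^n$ and attack it by Hahn--Banach separation combined with the Chebyshev bound on zeros. Introduce the continuous linear map
\[
T : C(D) \longrightarrow \mathbb{R}^n, \qquad T(\rho) = \left(\int_D f(x)f_j(x)\rho(x)\,dx\right)_{j=1}^n,
\]
and let $C_+(D) \subset C(D)$ denote the open cone of strictly positive continuous functions. Since $T(C_+(D)) \subset \mathbb{R}^n$ is then a convex cone, the theorem reduces to showing that $0 \in T(C_+(D))$.

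I would first check surjectivity of $T$. Otherwise some nonzero $c = (c_j)$ annihilates the image; setting $\varphi = \sum_j c_j f_j$ this forces $\int_D f\varphi\rho\,dx = 0$ for every continuous $\rho$, hence $f\varphi \equiv 0$. Because $f$ has a sign change, $f$ is nonzero on a nonempty open subarc, so $\varphi$ vanishes there, contradicting the Chebyshev bound of at most $n-1$ zeros for a nonzero generalized Chebyshev polynomial. Surjectivity plus the open mapping theorem make $T(C_+(D))$ an open convex cone. Now suppose for contradiction that $0 \notin T(C_+(D))$. Hahn--Banach separation of $\{0\}$ from this open convex cone yields a nonzero $c \in \mathbb{R}^n$ with
\[
\int_D f\varphi\rho\,dx > 0 \quad \text{for every continuous } \rho > 0,
\]
where $\varphi = \sum_j c_j f_j$. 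Testing with $\rho = \varepsilon + M\eta$ for a nonnegative bump $\eta$ concentrated near any point at which $f\varphi$ were negative would drive the integral negative as $M \to \infty$, so $f\varphi \geq 0$ pointwise on $D$, with $f\varphi \not\equiv 0$.

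The last step exploits the sign-change structure. Let $C_1, \ldots, C_q$ be the sign-change components of $f$ in cyclic order on $D$, with $q \geq m(D,n)$, and let $J_i$ be the arcs between them on which $f$ has alternating definite signs $\epsilon_i$. The inequality $f\varphi \geq 0$, together with $\varphi$ having only finitely many (at most $n-1$) zeros, implies that $\varphi$ attains the sign $\epsilon_i$ at some interior point $y_i \in J_i$. Since $\epsilon_{i+1} = -\epsilon_i$, the intermediate value theorem yields a sign change of $\varphi$ strictly between $y_i$ and $y_{i+1}$, and these sign changes lie in pairwise disjoint subarcs, so $\varphi$ has at least $q$ distinct zeros. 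Since $q \geq m(D,n)$ exceeds $n-1$ in both the interval case ($q \geq n$) and the circle case ($q \geq n+1$), this contradicts the Chebyshev hypothesis on $\{f_j\}$.

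The main obstacle I expect is upgrading the integral inequality to the pointwise inequality $f\varphi \geq 0$; this uses continuity of $f\varphi$ together with the freedom to insert arbitrarily concentrated strictly positive weights. The subsequent zero-count is essentially the converse side of the mechanism by which Theorem~1 is derived from Krein's existence result, and it goes through once one is careful that the witnesses $y_i$ are taken strictly inside each $J_i$ so that the $q$ zeros of $\varphi$ produced really are distinct.
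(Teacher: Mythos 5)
Your proof is correct, but it follows a genuinely different route from the paper. The paper's argument (attributed to Konyagin) is topological: it fixes the zeros $x_1,\dots,x_q$ of $f$, parametrizes step functions $C_p$ on the resulting arcs by points $p$ of the sphere $S^n$, and applies the Borsuk--Ulam theorem to the odd map $p\mapsto\bigl(\int_D g\,C_p f_i\,dx\bigr)_i$ to find $p$ with all moments vanishing; it then invokes Theorem~1 itself to conclude that the coordinates of $p$ all have one sign, and outputs the semi-explicit weight $\rho=C_p\,|f|$. Your argument replaces Borsuk--Ulam by Hahn--Banach separation of $0$ from the open convex cone $T(C_+(D))$, and pushes the resulting integral inequality down to the pointwise inequality $f\varphi\ge 0$, which is then killed by the same zero-counting mechanism for generalized Chebyshev polynomials that underlies Theorem~1 (alternation of $\varphi$ on the arcs between consecutive sign changes forces at least $q>n-1$ zeros). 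Each approach buys something: yours produces a strictly positive weight and handles $q>m(D,n)$ with no extra work, whereas the paper's $\rho$ vanishes at the $x_i$ and requires the preliminary ``narrowing'' of $D$ when $q$ exceeds the limiting value; the paper's construction, on the other hand, yields a more concrete $\rho$ (piecewise proportional to $|f|$) and reuses Theorem~1 as a black box rather than reopening the zero count. Two small points to watch: when $D$ is an unbounded (or open) interval you should restrict $T$ to weights for which the moment integrals converge (e.g.\ compactly supported perturbations of a fixed integrable positive function), exactly the issue the paper sidesteps by allowing $\rho=0$ off a subinterval; and your claim that the signs $\epsilon_i$ alternate on consecutive complementary arcs is the same structural fact about sign-change components that the paper also uses without proof, so you are not assuming more than it does.
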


 \begin{proof} It is not hard to give a proof of the theorem using classical methods of the theory of approximations. Konyagin proposed to use the Borsuk-Ulam theorem for carrying out the proof. He also explained to the author that arguments of this kind were used by Tikhomirov in his book [14, Sec. 4.5, Proposition 2].

Let $q$ denote the number of zeros of the function $f$ according to the condition $q \geq m(D, n)$. We are interested in the limiting case where $q = m(D, n)$. Now if $q$ exceeds this value, then we can ``narrow'' $D$ to the interval on which the number of sign changes is limiting and set $\rho(x) = 0$ on its complement.

Let $x_1, x_2,\dots, x_q$ be zeros of the function $f$ on $D$. These points divide $D$ into $n+1$ intervals. Let us consider the arbitrary function $g(x)$ on $D$ which changes sign only at the points $x_i, i = 1, 2,\dots, q$. We now define a map $T : S^n \rightarrow \mathbb{R}^n$ in the following way. For $p \in R_{n+1}$, $p = (t_1, t_2,\dots, t_{n+1})$, $\sum t^2_i = 1$, we consider the step function $C_p$ on $D$, where $C_p$ is equal to $t_i$ on the half-open interval $[x_i, x_{i+1})$. We introduce the notation
$$
y_i = \int \limits_D g(x)C_p(x)f_i(x)dx
$$
and set
$$
T (p) = (y_1, y_2,\dots, y_n) \in \mathbb{R}^n.
$$

According to the Borsuk-Ulam theorem, there exist two antipodes on the sphere, namely, the points $p$ and $-p$, such that $T (p) = T (-p)$. By construction we have $T (p) = -T (-p)$, and therefore $T (p) = 0$.

We shall show that all numbers $t_i$ are of the same sign. Let us consider the sign changes of the function $F (x) = g(x)C_p(x)$ on $D$. If all $t_i$ are of the same sign, i.e., the function $C_p$ is of a constant sign, then the functions $F (x)$ and $g(x)$ change sign on $D$ only at the points $x_i$. Now if the numbers $t_i$ change sign, then the number of sign changes of the function $F$ is smaller than that of the function $g$. However, this cannot be true in accordance with the preceding theorem. Note that the function $F$ on $D$ satisfies the conditions of Theorem 1: $\int_D F (x)f_i(x)dx = 0$. Hence, the number of zeros of $F$ is not smaller than $n$, all numbers $t_i$ are nonzero and of the same sign, and the function $C_p$ is of constant sign on $D$.

Let $g(x) = f (x)\vert f (x)\vert$. Then $g$ has the same zeros as $f$. The function $\rho(x) = C_p(x)\vert f (x)\vert$ is continuous, of constant sign, and vanishing only at the points $x_i$. Thus, it satisfies the conditions of the weight function and necessarily $\rho(x)$ is the required function. We have proved the theorem.
\end{proof}

The proof of Theorem 2 shows us that the bound of the number of sign changes in Theorem 1 can not be improved.

\begin{theorem} For a given collection of $m(D, n)$ points and the Chebyshev system of functions $\{f_i\}$ of order n on D there exists a function which changes sign only at the given collection of points and which is orthogonal to the given Chebyshev system with a weight function $\rho \equiv 1$.
\end{theorem}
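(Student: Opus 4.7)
The plan is to reuse the Borsuk–Ulam construction from the proof of Theorem 2, but to extract the orthogonal function itself rather than a weight. Let $x_1,\ldots,x_q$, with $q=m(D,n)$, be the prescribed collection of points; they partition $D$ into $n+1$ consecutive subintervals. I would first fix any continuous function $g\colon D\to\mathbb{R}$ that vanishes exactly at $x_1,\ldots,x_q$ and is of constant sign on each of the $n+1$ subintervals, with signs alternating between adjacent subintervals — for instance, a piecewise linear bump function.

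With $g$ in hand, I would repeat the construction from Theorem 2 verbatim: to each $p=(t_1,\ldots,t_{n+1})\in S^n$ associate the step function $C_p$ taking the value $t_i$ on $[x_i,x_{i+1})$, and form the map $T\colon S^n\to\mathbb{R}^n$ with coordinates $y_i=\int_D g(x)C_p(x)f_i(x)\,dx$. Since $T$ is odd in $p$, the Borsuk–Ulam theorem yields some $p$ with $T(p)=0$; this equality says exactly that the function $F(x)=g(x)C_p(x)$ satisfies $\int_D F(x)f_i(x)\,dx=0$ for all $i$, i.e., $F$ is $L_2$-orthogonal to the given Chebyshev system with weight $\rho\equiv 1$.

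The main remaining step — and the one that actually does the work — is the sign analysis, which I would lift verbatim from the proof of Theorem 2. If the coordinates $t_i$ did not all share the same sign, then $C_p$ would change sign at some interior $x_i$, and hence $F=gC_p$ would have strictly fewer than $q$ sign changes; but $F$ is orthogonal to an $n$-dimensional Chebyshev system, so Theorem 1 forces at least $m(D,n)=q$ sign changes, a contradiction. Consequently every $t_i$ shares one nonzero sign, $C_p$ is of constant sign throughout $D$, and therefore $F=gC_p$ changes sign precisely at the prescribed points $x_1,\ldots,x_q$, giving the required function.

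I expect essentially no new obstacle beyond Theorem 2: the argument is a direct re-reading of its Borsuk–Ulam step with $g$ treated as a free choice rather than the specific $f\lvert f\rvert$ used there. The one small thing worth checking explicitly is that $F$ is not identically zero, so that Theorem 1 applies non-vacuously — but this is immediate, since $p\in S^n$ is nonzero and $g$ is nonzero on every subinterval, so $F$ fails to vanish on at least one subinterval of positive measure.
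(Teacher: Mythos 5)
Your proposal is correct and follows essentially the same route as the paper: the paper's own proof of this theorem simply says to repeat the Borsuk--Ulam argument of Theorem~2 with $g$ taken to be an arbitrary continuous function changing sign exactly at the prescribed points, which is precisely what you do. Your added remarks (that $g$ should alternate sign across the subintervals, and that $F$ is not identically zero) are small but sensible explicit checks of points the paper leaves implicit.
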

\begin{proof}
The proof of this statement repeats almost verbatim the proof of Theorem 2. As the function $g,$ we take an arbitrary continuous function with zeros at the given collection of points. Then the zeros of the function $F (x) = g(x)C_p(x)$ coincide with those of the function $g$ and therefore the function $F$ is orthogonal to the given Chebyshev system with the weight function $\rho \equiv 1$.
\end{proof}

 \section {Convex curves in ${\Bbb R}^d$ and Chebyshev systems} 

We say that a curve $K$ in $\mathbb{R}^d$, which is an Euclidean space of dimension $d$, is {\it convex} if it intersects any hyperplane (a subspace of dimension $d-1$) at no more than $d$ points with due account of their multiplicity.

For the notion of the {\it multiplicity} of the point of intersection of a curve and a hyperplane there is no need to require that the curve be smooth. We interpret the multiplicity of the intersection at the chosen point as the largest number of intersection points lying in a small neighborhood of the chosen point after a slight perturbation of the hyperplane. In the sequel, we shall consider the cases of a closed and a nonclosed curve, i.e., the cases where the curve $K$ is an embedding of a circle, $\sigma: S^1 \rightarrow \mathbb R^d$, and an interval, $\sigma : (a, b) \rightarrow \mathbb{R}^d$.

This definition of a convex curve is introduced and studied in the works by Arnold [2], Vargas [7], and others. In geometry, a similar definition of convexity appeared at the beginning of the century and was mentioned in a number of works such as, for instance, the famous book by Burago and Zalgaller ``Geometric Inequalities''. 

We denote by $\Pi_n,d$ the space of all polynomials of degree not higher than $n$ on $\mathbb{R}^d$ and let $K$ be a curve embedded into $\mathbb{R}^d$. We shall consider the restriction of the space $\Pi_{n,d}$ to $K$ and denote the obtained space of functions on $K$ by $\Pi_{n,d}\vert K$.

We are interested in the case where the obtained space is a Chebyshev space, i.e., the functions on $K$ constituting the bases of the space $\Pi_{n,d}\vert K$ form a Chebyshev system. Theorems 1, 2, and 3 are applicable in the case of curves $K$ and functions on $K$ of this kind.

According to Theorem 1, if the function $f$ on $K$ is orthogonal to the space $\Pi_{n,d}\vert K$, then the number of its zeros is not smaller than dim $\Pi_{n,d}\vert K$. According to Theorem 3, this estimate can not be improved. Note that the parametrization of the curve is irrelevant here. The choice of different parameters of the curve will give different weight functions.

Consider the case $n = 1$. Here the dimension of the space $\Pi_{1,d}$ is $d + 1$. The dimensions of the spaces $\Pi_{1,d}$ and $\Pi_{1,d} \vert K$ coincide if and only if $K$ does not lie entirely in some hyperplane $\mathbb{R}^d$. For curves of this kind, the condition that the restriction of the space of polynomials to $K$ is Chebyshev space coincides with the condition that the curve $K$ is convex. This follows from the fact that any linear function on $\mathbb{R}^d$ (a polynomial of degree 1) uniquely defines the hyperplane on which it is zero and this hyperplane cannot intersect $K$ at more than $d$ points if $K$ is convex, and, at the same time, this condition is exactly the same condition under which the space of the restriction of the linear functions to $K$ is a Chebyshev space. We thus have obtained the following theorem.

 \begin{theorem} If the curve $K$ embedded into the Euclidean space $\mathbb{R}^d$ does not lie in any one of its hyperplanes, then the space is a Chebyshev space if and only if $K$ is a convex curve.
 \end{theorem}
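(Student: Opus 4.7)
The plan is to unwind the definitions on each side and observe that they describe the same geometric condition, namely a uniform bound on the size of the intersection $K \cap H$ as $H$ ranges over all hyperplanes of $\mathbb{R}^d$.

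First, I would identify the space $\Pi_{1,d}|K$ explicitly: its elements are restrictions to $K$ of affine functions $\ell(x) = a_0 + a_1 x_1 + \cdots + a_d x_d$. Since $K$ is not contained in any hyperplane, the restriction map $\Pi_{1,d} \to \Pi_{1,d}|K$ is injective, so $\dim \Pi_{1,d}|K = d+1$. Thus, being a Chebyshev space of order $d+1$ means exactly that for every nonzero affine $\ell$, the set $\{x \in K : \ell(x) = 0\}$ has at most $d$ distinct points. If $\ell$ is a nonzero constant this is automatic, and otherwise $\{\ell = 0\}$ is precisely a hyperplane $H_\ell$ of $\mathbb{R}^d$, and conversely every hyperplane arises this way (up to scaling of $\ell$). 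So the Chebyshev condition becomes: every hyperplane $H \subset \mathbb{R}^d$ meets $K$ in at most $d$ distinct points.

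For the direction convex $\Rightarrow$ Chebyshev, suppose $\Pi_{1,d}|K$ fails to be Chebyshev. Then some hyperplane $H$ meets $K$ in at least $d+1$ distinct points. Each such point contributes multiplicity at least $1$ to the intersection count (in the perturbation sense used to define multiplicity), so $K \cap H$ counted with multiplicity has cardinality $\geq d+1$, contradicting convexity of $K$.

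For the converse, suppose $K$ is not convex, so some hyperplane $H$ meets $K$ in more than $d$ points counted with multiplicity. By the definition of multiplicity via small perturbations of $H$, we can choose a hyperplane $H'$ arbitrarily close to $H$ that meets $K$ transversally in at least $d+1$ distinct points. Taking any nonzero affine $\ell'$ with zero set $H'$, the restriction $\ell'|K$ has at least $d+1$ distinct zeros on $K$, so $\Pi_{1,d}|K$ is not Chebyshev. The main subtlety in the whole argument is precisely this reconciliation between the ``distinct zeros'' formulation of the Chebyshev property and the ``with multiplicity'' formulation of convexity; the perturbation definition of multiplicity given earlier in the paper is exactly what is needed to bridge the two, and once this is noted the equivalence is immediate.
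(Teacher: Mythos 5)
Your proposal is correct and follows essentially the same route as the paper: identify $\Pi_{1,d}|K$ with the $(d+1)$-dimensional space of affine functions via the non-degeneracy hypothesis, use the bijection between (non-constant) linear forms and hyperplanes, and match the Chebyshev bound of $d$ zeros with the convexity bound of $d$ intersection points. You are in fact somewhat more careful than the paper, which passes over the distinct-points versus with-multiplicity discrepancy that you explicitly resolve via the perturbation definition of multiplicity.
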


For $n > 1$, the space $\Pi_{n,d}\vert K$ may not be Chebyshev space.

\medskip

\noindent{\bf Example 1.} Let us consider, in the plane, a convex curve $K$ which intersects some circle at more than four points. It is easy to construct such a curve. Consider a regular $m$-gon and a circle centered at the center of the $m$-gon with radius smaller than that of the circumscribed circle of the polygon, but larger than that of the inscribed circle. The circle will intersect the polygon at $2m$ points. By a slight perturbation, we can turn the polygon into a smooth convex curve which will intersect the circle at $2m$ points as before. For this curve $K$, the space $\Pi_{2,2}\vert K$  is not a Chebyshev space since if it were Chebyshev, then its dimension would not exceed $5$ and, hence, any second-order curve would intersect $K$ at no more than four points.

\medskip

The following natural question arises: for what class of curves in $\mathbb{R}^d$ will the space $\Pi_{n,d}\vert K$ be a Chebyshev space? We have no answer for the case $n>1$. For $n = 1$ the answer is given by Theorem 4.

\medskip

\noindent{\bf Example 2.} It is interesting that if we consider $P_{n,d}$, which is the space of homogeneous polynomials of degree n on $\mathbb{R}^d$, and a curve $K$ such that the space $P_{n,d}\vert K$ is Chebyshev space, then $K$ may not necessarily be convex. For instance, if $K$ is the graph of the function $y = f (t)$, $t \in (a, b)$, in the plane, then the condition that the space be Chebyshev is fulfilled if $\frac{f(t)}{t}$ is a monotonic function on $(a, b)$. In particular, if $f(t) = \sin(t) + c$, $0 < a < b < c-1$, then this condition is fulfilled and the curve $K$ is not convex if $b-a > \pi$.

\medskip

We can give a large number of examples of convex curves where $\Pi_{n,d}\vert K$ is a Chebyshev space.

\medskip

\noindent{\bf Example 3.} Let the curve $K$ in $\mathbb{R}^d$ be defined as
$$x(t) = (t, t^2,\dots, t^d), t \in (a, b). $$
The curve $K$ is convex and $\Pi_{n,d}\vert K$ is a Chebyshev space of dimension $nd + 1$. This follows from the fact that the basis of this space, $1, t, t^2,\dots, t^{nd}$, forms a classical polynomial Chebyshev system. According to Theorem 1, the number of zeros of the function that is orthogonal to this system is not smaller than $nd + 1$. Due to Theorem 3, this estimate is attainable.

\medskip

\noindent{\bf Example 4.} Let $$x(t) = (\cos(t),\dots, \cos(kt), \sin(t),\dots, \sin(kt)), t
\in [0, 2\pi],$$ where $d = 2k$ and $K$ is a convex closed curve in $\mathbb{R}^d$. The dimension of the Chebyshev space $\Pi_{n,d}\vert K$ is $nd + 1$. If the function $f$ on $K$ is orthogonal to this space, then the number of its zeros is not smaller than $nd + 2$. This follows from the fact that $K$ is an embedding of a circle.

\medskip

\noindent{\bf Example 5.} It is well known that the collection of arbitrary power functions $\{1, t^{\alpha_1},\dots, t^{\alpha_d}\}$, where $0 < \alpha_1 < \dots < \alpha_d$, forms a Chebyshev system on an arbitrary interval [12]. If we consider the curve $K$ on $\mathbb{R}^d$:
$$x(t) = (t^{\alpha_1} , t^{\alpha_2} ,\dots, t^{\alpha_d} ), t \in (a, b), a > 0,$$
we see that it is convex. This is due to the fact that $\Pi_{n,d}\vert K$ is a Chebyshev space. The dimension of the space $\Pi_{n,d}\vert K$ is not smaller than $nd + 1$. If the set of numbers $\{\alpha_i\}$ is rationally independent, then the dimension of $\Pi_{n,d}\vert K$ coincides with that of $\Pi_{n,d}$.

\medskip

\noindent{\bf Example 6.} Consider the convex curve $K: x(t) = (t, e^t), t \in (a, b)$, in the plane. The basis of the space $\Pi_{n,2}\vert K$ is defined by the set
$$
\{1, t,\dots, t^n, e^t,\dots, e^{nt}, te^t, t^2e^t,\dots, te^{(n-1)t}\}.
$$
This set forms a Chebyshev system [10, 11] of order $\frac{(n+1)(n+2)}{2}$ . According to Theorem 1, every function which is orthogonal to $\Pi_{n,2}\vert K$ has at least $\frac{(n+1)(n+2)}{2}$ zeros, and not $2n+1$ as in Example 2.

\section{Zeros of functions on convex curves} 

Examples 3 and 4 in the preceding section show that the number of zeros of the function f on the convex curve $K$ that is orthogonal to $\Pi_{n,d}\vert K$ is not smaller than $nd + 1$ if $K$ is not closed and $nd + 2$ if it is closed. In this case, the estimate for these curves can not be improved. It turns out that a similar estimate is valid for arbitrary convex curves.

\begin{theorem} [on zeros of a function] Let $f$ be a function defined on the convex curve $K$ embedded into $\mathbb{R}^d$ and suppose that on this curve there exists a parametrization $s$ such that
$\int_K P_n(x(s))f(s)ds = 0$ for the arbitrary polynomial $P_n(x)$ in $d$ variables of degree not higher than $n$. Then the function $f$ on $K$ has at least $dn + 2$ zeros if $K$ is a closed curve and $dn + 1$ zeros if $K$ is not closed.
\end{theorem}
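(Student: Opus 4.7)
The plan is to adapt the proof of Theorem~1 by applying the Krein statement, already quoted above, not to the (possibly non-Chebyshev) space $\Pi_{n,d}|K$ itself but to the Chebyshev system $\Pi_{1,d}|K$, which is Chebyshev of order $d+1$ by Theorem~4 (using that a convex curve in $\mathbb{R}^d$ cannot lie in any hyperplane). Argue by contradiction: suppose $f$ has only $q$ sign changes, with $q\le nd$ when $K$ is not closed and $q\le nd+1$ when $K$ is closed. In the closed case, $\Pi_{1,d}|K$ is Chebyshev on a circle only if its dimension $d+1$ is odd, so $d$ is even, and combined with the evenness of the number of sign changes on a circle this reduces the closed hypothesis to $q\le nd$ as well. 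In either case it suffices to produce $P\in\Pi_{n,d}$ whose restriction to $K$ changes sign exactly at the zeros $x_1,\dots,x_q$ of $f$: the product $P(x(s))\cdot f(s)$ will then have constant sign on $K$ and not be identically zero, forcing $\int_K P(x(s))f(s)\,ds\ne 0$ and contradicting the orthogonality hypothesis.

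The polynomial $P$ is constructed as a product of $n$ linear forms in $\mathbb{R}^d$. Ordering the zeros of $f$ along $K$, partition them into $n$ consecutive groups $Q_1,\dots,Q_n$ with $|Q_j|=q_j\le d$ and $\sum q_j=q$; in the closed case each $q_j$ is chosen even, which is possible because $q$ and $d$ are both even. For each $j$, apply the Krein statement to $\Pi_{1,d}|K$ with $p=0$ and odd-multiplicity root set prescribed to be $Q_j$: the condition $2\cdot 0+q_j=q_j\le d<d+1$ is satisfied, so there exists a linear form $L_j$ on $\mathbb{R}^d$ whose restriction to $K$ has simple roots exactly at the $q_j$ points of $Q_j$ and no other roots on $K$. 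Hence $L_j$ changes sign on $K$ precisely at the points of $Q_j$ and nowhere else.

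Setting $P=L_1 L_2\cdots L_n\in\Pi_{n,d}$, the disjointness of the $Q_j$'s guarantees that $P|K$ has multiplicity exactly $1$ at each $x_i$ (contributed by the unique $L_j$ with $x_i\in Q_j$) and no other zeros on $K$; consequently $P|K$ has sign changes exactly at $x_1,\dots,x_q$, matching the sign-change pattern of $f$. Therefore $P(x(s))\cdot f(s)$ has constant sign on $K$ and is not identically zero, so $\int_K P(x(s))f(s)\,ds\ne 0$, contradicting orthogonality. This yields $q\ge nd+1$ in the non-closed case and $q\ge nd+2$ in the closed case (since $q$ and $nd$ are both even); sharpness of both bounds is already witnessed by the polynomial and trigonometric moment curves of Examples~3 and~4. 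The only point requiring care is verifying that the Krein statement can be applied on the circle with the prescribed $Q_j$, which is ensured by the parity choice of even $q_j$ in the closed case; the main conceptual step---shifting from the non-Chebyshev space $\Pi_{n,d}|K$ to the Chebyshev space $\Pi_{1,d}|K$ and reassembling the obstruction via a product of linear forms---is the heart of the argument and is made available precisely by Theorem~4.
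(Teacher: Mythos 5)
Your proof is correct in outline and reaches the same contradiction as the paper --- a polynomial of degree at most $n$, built as a product of linear forms each changing sign on $K$ exactly at a prescribed group of at most $d$ zeros of $f$, whose product with $f$ has constant sign and so cannot be orthogonal to it --- but you obtain the linear factors by a genuinely different mechanism. The paper constructs them geometrically: for each full group of $d$ zeros it takes the hyperplane through those points (unique by convexity), whose support function vanishes on $K$ only there and simply; for the leftover group of $\ell \le d$ points it uses a limiting-hyperplane argument, letting $d-\ell$ auxiliary points collapse onto an endpoint of $K$ (open case) or onto one of the prescribed points (closed case, where the resulting intersection multiplicity $d-\ell+1$ is odd because $d$ and $\ell$ are even, so the sign still changes there). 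You instead invoke Theorem 4 to see that $\Pi_{1,d}\vert K$ is a Chebyshev system of order $d+1$ on the parameter domain and apply the quoted Krein existence statement with $p=0$ and $q_j\le d<d+1$ to manufacture each factor, handling the circle by forcing every $q_j$ to be even. Your route is shorter, avoids the delicate limiting construction, and makes the connection to the paper's Open Question 1 transparent; its cost is that Theorem 5 then inherits a dependence on the Krein statement (including its validity on the circle and the meaning of root multiplicity for merely continuous Chebyshev systems), a dependence the paper deliberately avoids --- the author remarks that Chebyshev systems are not used in this proof at all. Both arguments conclude identically with the parity step giving $q\ge nd+2$ for closed curves.
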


Example 1 shows that $\Pi_{n,d}\vert K$ is not a Chebyshev space for any convex curve the space , and therefore we cannot use Theorem 1 in order to prove this theorem. The proof that we propose here is close to that of Theorem 1, although Chebyshev systems are not used in it.

\begin{proof} Assume that the number of zeros of the function f on $K$ is $m$, where $m \leq dn$. Then we can represent $m$ as $m = kd + l$, $0 < l \leq d, k + 1 \leq n$. We divide the set $S$ of $m$ points of the curve at which the function vanishes into $k + 1$ subsets $S_1, S_2,\dots, S_{k+1}$, where each of the first $k$ subsets contains $d$ points and $S_{k+1}$ contains $l$ points.

Let us consider a hyperplane passing through $d$ points of the subset $S_i$, $i < k + 1$, and denote by $L_i(x), x \in \mathbb{R}^d$, the support function of the set $S_i$ (a nonzero linear function which vanishes in the hyperplane). By the definition of a convex curve, this hyperplane has no other points of intersection with $K$ other than $S_i$, and, consequently, $L_i$ vanishes on $K$ only at the points from $S_i$.

Let us show that through any given $\ell \leq d$ points on $K$ we can draw a hyperplane whose support function on $K$ changes sign only at the defined points. If the curve is closed, then we choose one of the defined points, and if it is not closed, then we choose an endpoint of $K$. We mark $d-\ell$ points on $K$ in a small neighborhood of the chosen point. From the obtained $d$ points we can uniquely determine a hyperplane which intersects $K$ only at these points. We turn our attention to the new points and the chosen point and consider the limiting hyperplane. Possibly, it may ``touch'' $K$ at some points besides the given points, but it is clear from the consideration of a limit that the support function of this hyperplane on $K$ can change sign only at $\ell$ given points.

In the case of a closed curve, the multiplicity of the intersection of the limiting hyperplane with $K$ is $d-\ell + 1$. Since convex curves can be closed only in even dimensions [2] and the function always has an even number of zeros on them, the numbers $d$ and $\ell$ are even and, hence, the multiplicity of the chosen point is an odd number, i.e., the function changes sign at it. If the curve is not closed, then the multiplicity of the intersection for all given points is equal to $1$. Thus, the support function changes sign only at the given points. We denote it by $L_{k+1}$.

Let $P (x) = L_1(x)L_2(x) \dots L_{k+1}(x)$. Then $P (x)$ is a polynomial on $\mathbb{R}^d$ of degree $k + 1 \leq n$ which changes sign on $K$ only at points from the set $S$. Since the sets of points where $f$ and $P$ change sign coincide, it follows that $f (x)P (x)$ is a function of constant sign on $K$ and the integral of this function cannot vanish. This contradicts to the hypothesis of the theorem and, hence, there are at least $dn + 1$ zeros of the function $f$ on $K$. For closed curves, the number of zeros must be even. Since convex curves can be closed only for even $d$, the number of zeros for them is not smaller than $dn + 2$. We have proved the theorem.
\end{proof}

Two open questions arise in connection with this theorem:

\medskip

\noindent {\it 1. Is it true that for the arbitrary convex curve $K$ in the space $\Pi_{n,d}\vert K$ there exists a Chebyshev subspace of dimension $nd + 1$?}

\medskip

If it is true, then Theorem 5 is a corollary of Theorem 1.

\medskip

\noindent {\it 2. Is it true that for any arbitrary convex curve $K$ and function $f$ defined on it which is orthogonal to the space $\Pi_{n,d}\vert K,$ the number of zeros of $f$ is not smaller than dim $\Pi_{n,d}\vert K$?}

\medskip

Theorem 1 answers this question affirmatively for Chebyshev spaces. The question is whether it is necessary for the space to be Chebyshev. Possibly, we must consider the case of algebraic curves separately.

\section{Corollaries and the Four-Vertex Theorem} 

According to Theorem 4, $\Pi_{1,d}\vert K$ is a Chebyshev space for $n = 1$. In this case, it follows from Theorem 5 of zeros (or Theorem 1) that the number of zeros of the function which is orthogonal to $\Pi_{1,d}\vert K$ is not smaller than $d + 1$, and $d + 2$ if the curve is closed.

In all generalizations of the Four-Vertex theorem for closed curves in $\mathbb{R}^d$ (see [2, 8, 7]) we have precisely the number $d + 2$. This coincidence is not accidental, and we shall show later that the Four-Vertex Theorem follows from the theorem of zeros (to be more precise, from Hurwitz theorem (Assertion 1) which is a corollary of Theorem 1). It is not quite clear how the results of the papers [2, 8, 7] can be obtained from Theorem 5. These results can be interpreted as the estimate of the least number of zeros (vertices, flat points) of some functions. It follows from Theorem 2 that these functions are orthogonal to the space $\Pi_{1,d}\vert K$ with certain weight functions. It is of interest to find explicit expressions for weight functions and the parametrization of curves.

Below we shall give interpretations for the theorem of zeros for the case $n = 1$. It is convenient to formulate them in the language accepted in mechanics with the use of the concept of the center of mass.

We shall assume that the curve $K$ embedded into $\mathbb{R}^d$ has a mass and that the mass of its arc is in direct proportion to the length of the arc; if $dm$ is the mass of the $ds$ long arc, then $dm = \rho(s)ds$. The continuous positive function $\rho$ is called a function of density of the mass on a curve, and if $\rho$ is a constant, then we say that the mass is uniformly distributed. The center of mass of a curve with the density function $\rho(s)$ is the point $x_c \in \mathbb{R}^d$:

$$
x_c(K, \rho) = \frac{1}{M_\rho}
\int \limits_K
x(s)\rho(s)ds,\text{ } x \in \mathbb{R}^d, \text{   } M_\rho = \int \limits_K
\rho(s)ds.
$$
The quantity $M_\rho$ is the mass of the curve with the density function $\rho$.

Here is a corollary of the theorem of zeros. If the curve is not closed, then the endpoints are considered to be extremal.

\begin{prop} Let $f$ be a continuous function defined on $K$ which is a compact convex curve in $\mathbb{R}^d$ such that the center of mass of the curve with a uniformly distributed mass coincides with the center of mass of $K$ with the density function equal to $f$ . Then for $f$ on $K$ there exist at least $d + 2$ extremal points.
\end{prop}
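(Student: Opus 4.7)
The plan is to translate the center-of-mass condition into the orthogonality hypothesis of Theorem 5 with $n=1$, applied to $f$ shifted by a suitable constant, and then to convert the sign changes produced by that theorem into local extrema of $f$. Parametrize $K$ by arc length $s$, set $|K|=\int_K ds$, $M=\int_K f(s)\,ds$, $c=M/|K|$, and $\tilde f = f - c$. Writing the common center of mass as $x_c\in\mathbb{R}^d$, the hypothesis reads $\int_K x(s)\,ds = |K|\,x_c$ and $\int_K x(s)f(s)\,ds = M\,x_c$; combined with the choice of $c$ these immediately yield $\int_K \tilde f\,ds = 0$ and $\int_K x_i(s)\tilde f(s)\,ds = 0$ for every coordinate $x_i$. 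Thus $\tilde f$ is orthogonal on $K$ (with unit weight in the arc-length parametrization) to every polynomial of degree at most $1$, and Theorem 5 applies with $n=1$.

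Theorem 5 then yields at least $d+2$ sign changes of $\tilde f$ when $K$ is closed and at least $d+1$ when $K$ is a compact arc. Since $\tilde f = f - c$, each sign change is a point where $f$ crosses the level $c$. On each open piece of $K$ between two consecutive sign-change components $\tilde f$ is of constant sign and not identically zero (by the convention of Section~2), so $f - c$ is $\ge 0$ or $\le 0$ there with strict inequality at some interior point; because $\tilde f$ vanishes on the boundary of the piece, the maximum (resp.\ minimum) of $f$ on the closure of that piece is attained at an interior point, producing a genuine local extremum of $f$.

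In the closed case the $d+2$ sign changes partition $K$ into $d+2$ arcs each contributing a local extremum, for a total of $d+2$. In the open case the $d+1$ sign changes inside $K$ give $d$ interior subintervals, each yielding an interior local extremum, together with the two end subintervals containing the endpoints of $K$; the endpoints are extremal by the stated convention, so we again obtain $d+2$ extremal points in all. The one step requiring some care is the extraction of a genuine local extremum from each piece in the presence of possibly non-pointlike zero sets of $\tilde f$, which is handled exactly by the sign-change convention guaranteeing that between consecutive sign-change components $\tilde f$ takes nonzero values of a single sign. I do not anticipate a deeper obstacle, since after the shift $f\mapsto f-c$ the proposition reduces directly to the theorem of zeros.
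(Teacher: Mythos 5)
Your proposal is correct and follows essentially the same route as the paper: subtract the mean value $c=M/|K|$ (the paper's normalization $M_f=M_\rho$ with $h=f-\rho$), observe that the coincidence of masses and centers of mass makes the shifted function orthogonal to $\Pi_{1,d}|K$, and apply Theorem 5 with $n=1$. Your explicit accounting of how the $d+1$ (open) or $d+2$ (closed) sign changes yield $d+2$ extrema is a welcome elaboration of a step the paper leaves implicit, but it is not a different method.
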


We can propose a relative version of this statement. If $f$ and $g$ are continuous positive functions on $K$ such that their centers of mass coincide, then the function $f =g$ on $K$ has at least  $d + 2$ extrema.

\begin{proof} We can assume, without loss of generality, that the mass $M_f$ is equal to $M_\rho$, where $\rho = const$ is the density of the uniformly distributed mass. Let $h(t) = f (t)-\rho$, $t\in  K$. Then it follows from the coincidence of the masses and the centers of mass that the function h(t) is orthogonal to $\\Pi_{1,d}\vert K$. According to Theorem 5, this function has at least $d + 1$ sign changes if $K$ is not closed and $d + 2$ sign changes if it is closed. It follows that the function $h$ (i.e., $f$ as well) has at least $d + 2$ extrema.

In order to prove the relative version of the statement, we can show, by analogy, that the function $f-g$ with $Mf = Mg$ on $K$ has at least $d + 1$ zeros. Consequently, the function $f/g-1$ has the same number of zeros and no less than $d + 2$ extrema.
\end{proof}

Using Proposition 1, we can easily derive the Four-Vertex Theorem. If $R(\alpha) = 1/k(\alpha)$ is the radius of curvature at a given point of the oval (a convex smooth closed curve in the plane), $\alpha$ is the angle between the positively directed tangent to the point of the oval and the direction of the $x$-axis, then (see [15])
$$
\int\limits_{-\pi}^{\pi}
R(\alpha) \cos(\alpha)d\alpha = 0\text{,  }
\int\limits_{-\pi}^{\pi}
R(\alpha) \sin(\alpha)d\alpha = 0.
$$

Let $K$ be a unit circle (parameter $\alpha$) parameterized in the standard way, with center at the origin and suppose that a function $f = R$ is defined on it. The conditions imposed on $R$ mean that its center of mass coincides with the center of the circle. Using now Proposition 1, we find that since $d = 2, R$ has no less than four extrema.

We can derive by analogy the following result, which apparently belongs to Blaschke [15]: Let $C_1$ and $C_2$ be two plane (positively directed) convex closed curves, $ds_1$ and $ds_2$ be elements of the arc at points with parallel (similarly directed) support lines; then the ratio $ds_1/ds_2$ has at least four extrema. This theorem becomes the Four-Vertex Theorem for an oval if $C_2$ is a circle.

\section{Discrete Analog of the Theorem of Zeros} 

We say that the polygonal line $K$ in $\mathbb{R}^d$ is {\it convex}  if every hyperplane which does not pass through the vertices of $K$ cuts the polygonal line at no more than d points. There is a version of the theorem of zeros for a convex polygonal line.

\begin{theorem} (a discrete version of the theorem of zeros). Suppose that at the nodes $x_i\in \mathbb{R}^d, i = 1, 2,\dots, k$, of the convex polygonal line $K$ embedded into $\mathbb{R}^d$ there are numbers $f_i$ such that
$\sum_i P_n(x_i)f_i = 0$ for the arbitrary polynomial $P_n(x)$ in $d$ variables of degree not higher than $n$. Then the numbers $f_1, f_2,\dots, f_k$ change sign at least $dn + 1$ times if $K$ is not closed and $dn + 2$ times if it is closed.
\end{theorem}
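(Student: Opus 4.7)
The plan is to mimic the proof of Theorem 5 in the polygonal setting, again by contradiction. Assume $(f_i)$ has $m \le dn$ sign changes; I may drop any vertex with $f_i = 0$ since it contributes nothing to the sum $\sum_i P(x_i) f_i$, so the sign changes occur exactly on edges whose endpoints carry opposite-sign values. On each such edge I pick a point in its interior, producing $m$ marked points $z_1,\ldots,z_m$ on $K$. Writing $m = kd + \ell$ with $k+1 \le n$ and $0 < \ell \le d$, I partition them into groups $S_1,\ldots,S_{k+1}$ of sizes $d,d,\ldots,d,\ell$, matching the proof of Theorem 5.

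For each of the first $k$ groups $S_i$, I take the hyperplane through its $d$ points. Since the marked points lie in the interiors of edges and may be slid slightly along them, a small perturbation arranges that this hyperplane avoids every vertex of $K$; by the definition of a convex polygonal line it then meets $K$ in exactly those $d$ points, and the corresponding linear function $L_i$ changes sign on $K$ precisely at the points of $S_i$. For the final group $S_{k+1}$ of size $\ell$, I reuse the limiting trick from Theorem 5: in the non-closed case I place $d - \ell$ auxiliary points near an endpoint of $K$ and let them converge to it; in the closed case I cluster them near a chosen point of $S_{k+1}$. The limit yields a linear function $L_{k+1}$ whose only sign changes on $K$ are at the $\ell$ points of $S_{k+1}$.

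The polynomial $P = L_1 L_2 \cdots L_{k+1}$ has degree $k+1 \le n$ and changes sign on $K$ precisely at $z_1,\ldots,z_m$, which by construction are the sign-change locations of $(f_i)$. Consequently, crossing any $z_j$ flips the sign of $P|_K$ and of the sequence $(f_i)$ simultaneously, so after possibly replacing $P$ by $-P$ the product $f_i P(x_i)$ has one fixed sign at every vertex and is nonzero. Hence $\sum_i P(x_i) f_i \ne 0$, contradicting the orthogonality hypothesis applied to $P$. This gives at least $dn + 1$ sign changes; closedness forces $d$ to be even and the sign-change count to be even, yielding $dn + 2$ in that case.

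The main obstacle I expect is the limiting step for $S_{k+1}$. In the smooth case, tangency with a curve has a natural multiplicity; in the polygonal setting one must show that clustering the auxiliary points inside a single edge of $K$ (and then perturbing slightly to stay off vertices) produces a hyperplane whose sign changes on $K$ are confined to the prescribed $\ell$ points, with the parity of the "touching" intersection matching the even/odd bookkeeping required in the closed case. Verifying this parity is the delicate point, but it follows as in the smooth argument because convexity forces the limiting hyperplane to behave, near the cluster point, like a tangent to a smooth strictly convex arc.
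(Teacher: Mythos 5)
Your proof follows the same route as the paper: the paper's entire proof of Theorem 6 is the remark that one repeats the proof of Theorem 5 with the integral replaced by a sum, and your argument is precisely that adaptation (marking the sign-change points on edges, grouping them into $d$-point subsets, multiplying the resulting linear forms, and handling the residual group of $\ell$ points by the limiting construction). You in fact supply more detail than the paper does, including correctly flagging the one genuinely delicate step --- the multiplicity and parity bookkeeping for the clustered intersection defining $L_{k+1}$ --- which the paper leaves implicit.
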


It is easy to formulate the discrete analog of Proposition 1. We give it here in a relative form.
\begin{prop}
If $\{f_i\}$ and $\{g_i\}$ are sets of masses at the nodes of the convex polygonal line $K$ such that their total masses and centers of mass coincide, than the quantities $f_i-g_i, i = 1,\dots, k$, change sign at least $d + 1$ times ($d + 2$ times if $K$ is closed).
\end{prop}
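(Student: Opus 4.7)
The plan is to recognize that the hypothesis is exactly the $n=1$ orthogonality condition of Theorem 6 applied to the difference sequence $h_i := f_i - g_i$, and then invoke Theorem 6 directly.

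First I would set $h_i = f_i - g_i$ and translate the two given hypotheses into orthogonality statements. The equality of total masses, $\sum_i f_i = \sum_i g_i$, becomes $\sum_i h_i = 0$, which is orthogonality against the constant polynomial $P_0 \equiv 1$. The equality of centers of mass, combined with equal total masses, becomes $\sum_i f_i x_i = \sum_i g_i x_i$, i.e.\ $\sum_i h_i x_i = 0$ in $\mathbb{R}^d$. Taking the inner product with any vector $v \in \mathbb{R}^d$ shows $\sum_i h_i \langle v, x_i\rangle = 0$, so $h_i$ is orthogonal to every linear functional on $\mathbb{R}^d$. Together these say $\sum_i P(x_i) h_i = 0$ for every polynomial $P$ of degree at most $n=1$.

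Next I would apply Theorem 6 with $n = 1$ to the sequence $\{h_i\}$ on the convex polygonal line $K$. The conclusion of Theorem 6 in this case is exactly that $\{h_i\}$ changes sign at least $d \cdot 1 + 1 = d+1$ times if $K$ is not closed and at least $d+2$ times if $K$ is closed, which is what is asserted.

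There is essentially no hard step: this proposition is a direct corollary of Theorem 6, the only content being the bookkeeping that ``equal total masses and equal centers of mass'' is the right rephrasing of orthogonality to $\Pi_{1,d}$ restricted to the vertex set. The one thing worth remarking on, if anything, is that one should not forget the case of a closed polygonal line: by the same parity argument used in the proof of Theorem 5 (convex closed polygonal lines only exist in even $d$, and the number of sign changes of a sequence around a cycle is automatically even), the lower bound $d+1$ is boosted to $d+2$ in the closed case, which matches the statement.
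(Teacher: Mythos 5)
Your proposal is correct and follows exactly the route the paper intends: form the difference sequence $h_i = f_i - g_i$, observe that equal total masses and equal centers of mass amount to orthogonality of $\{h_i\}$ to all polynomials of degree at most $1$, and apply Theorem~6 with $n=1$. This matches the paper's own (sketched) proof, which reduces Proposition~2 to the discrete theorem of zeros just as Proposition~1 is reduced to Theorem~5, with the integral replaced by a sum.
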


The proof of Theorem 6 and Proposition 2 is similar to the proof of Theorem 5 and Proposition 1, with the only difference that the integral must be replaced with a sum.

There exist several versions of the Four-Vertex Theorem for a polygon [5, 6]. Here, we shall consider one of them, namely, {\it Aleksandrov's lemma} from his book [16]. We give it in a somewhat less general form than it is given originally.

\medskip

\noindent{\it Suppose that two convex polygons $M_1$ and $M_2$ are given in the plane whose sides are parallel and whose perimeters are equal. Then, when we traverse the polygon $M_1$, the difference of the lengths of the respective parallel sides must change sign at least four times.}

\medskip

Note that for an arbitrary polygon with sides lengths $a_i$ we have the equality $$\sum_i a_in_i = 0,$$ where $n_i$ is a unit normal vector to the side $a_i$. The polygons $M_1$ and $M_2$ have the same set of vectors $n_i$, and if we consider the polygon formed by the ends of the vectors $n_i$ laid off from the same point and define, at the vertices of the resulting polygon, two sets of numbers equal to the lengths of the respective sides of the polygons $M_1$ and $M_2$, then these sets of numbers will satisfy the hypothesis of Proposition 2 for $d = 2$. Thus, Aleksandrov's lemma is a corollary of Proposition 2.

\medskip

{\bf Acknowledgments.}   
I want to express my gratitude to V. I. Arnold who looked through the paper and made useful remarks, to A. L. Garkavi and V. B. Demidovich for useful and constructive discussions of themes related to Chebyshev systems, to S. V. Konyagin for his attention to this paper and for the remarkable, to my knowledge, proof of the inverse theorem, to I. K. Babenko and V. D. Sedykh for useful discussions and remarks.

This paper was written under the support of the Russian Foundation for Basic Research
 (project no. 97-01-00174) and the European Program ESPRIT (grant 21042).

\medskip

\medskip

O. R. Musin, Chebyshev systems and zeros of a function on a convex curve,
PROCEEDINGS OF THE STEKLOV INSTITUTE OF MATHEMATICS, Vol. 221, 236-246, 1998.

\end{document}